\documentclass[11pt, twoside]{amsart}

\usepackage{amsmath,amssymb,amsthm,enumitem,stmaryrd,mathrsfs}

\usepackage{thm-restate,hyperref}

\usepackage[pdftex]{graphicx}

\usepackage[top=1.5in, bottom=1.5in, left=1.5in, right=1.5in]{geometry}

\usepackage{tikz}

\allowdisplaybreaks

\theoremstyle{plain}
\newtheorem{theorem}{Theorem}[section]
\newtheorem{lemma}[theorem]{Lemma}
\newtheorem{cor}[theorem]{Corollary}

\newtheorem*{lemma*}{Lemma}
\newtheorem*{cor*}{Corollary}
\newtheorem*{theorem*}{Theorem}

\theoremstyle{definition}

\newtheorem{problem*}{Problem}

\theoremstyle{remark}

\newtheorem*{fact*}{Fact}
\newtheorem*{remark}{Remark}

\let\oldproofname=\proofname
\renewcommand{\proofname}{\rm\bf{\oldproofname}}

\newcommand{\R}{\mathbb R}

\newcommand{\Z}{\mathbb Z}

\makeatletter
\providecommand*{\twoheadrightarrowfill@}{%
  \arrowfill@\relbar\relbar\twoheadrightarrow
}
\providecommand*{\twoheadleftarrowfill@}{%
  \arrowfill@\twoheadleftarrow\relbar\relbar
}
\providecommand*{\xtwoheadrightarrow}[2][]{%
  \ext@arrow 0579\twoheadrightarrowfill@{#1}{#2}%
}
\providecommand*{\xtwoheadleftarrow}[2][]{%
  \ext@arrow 5097\twoheadleftarrowfill@{#1}{#2}%
}
\makeatother

\newcommand{\op}[1]{\operatorname{{#1}}}

\newcommand{\mc}[1]{\mathcal{{#1}}}

\newcommand{\Disc}{$\mc{D}$isc}

\newcommand{\GL}{\op{GL}}

\newcommand{\Diff}{\op{Diff}}

\begin{document}
    \title{Disc-presheaves and normal invariants}
    \author{Oliver H. Wang}
	\maketitle
\begin{abstract}
In this note, we elaborate on a result of Krannich--Kupers--Mehzer \cite{KKM} on \Disc-presheaves.
We show that equivalences of $2$-truncated \Disc-presheaves of manifolds yields homotopy equivalences with trivial normal invariant.
As an application, we show that two closed, simply connected manifolds are homeomorphic if and only if their $2$-truncated \Disc-presheaves are equivalent.
\end{abstract}
\section{Introduction}


We work in the $\infty$-categorical context so we write category to mean $\infty$-category, etc.
We let $\mc{S}$ denote the category of spaces, $\mc{S}_*$ denote the category of pointed spaces and $\mc{S}p$ denote the category of spectra.
Let $\mc{M}fd_d$ denote the category of smooth $d$-manifolds whose morphisms are smooth embeddings.
Let $\mc{D}isc_d$ denote the full subcategory whose objects are diffeomorphic to a disjoint union of $\R^d$'s.
Define $\mc{D}isc_{d,\le n}\subseteq\mc{D}isc_d$ to be the full subcategory whose objects are diffeomorphic to a disjoint union of at most $n$ copies of $\R^d$.
These may be regarded as quasi-categories by taking homotopy coherent nerves of simplicial categories.

A \emph{\Disc-presheaf} is a functor $(\mc{D}isc_d)^{op}\to \mc{S}$ and an \emph{$n$-truncated \Disc-presheaf} is a functor $\mc{D}isc_{d,\le n}^{op}\to\mc{S}$.
A smooth $d$-manifold $M$ determines a \Disc-presheaf $E_M$ via the restricted Yoneda embedding.
So we have $E_M(\underline{k}\times\R^d)=\op{Emb}(\underline{k}\times\R^d,M)$ where $\underline{k}:=\{1,\cdots,k\}$ and $\op{Emb}$ denotes the space of smooth embeddings in the smooth topology.
An important feature of \Disc-presheaves $X$ is that $X(\underline{k}\times\R^d)$ has an action of $\op{Diff}(\R^d)^{\times k}$.
The functors $E_M$ play a large role in embedding calculus and have been important in many recent results in high dimensional topology.
We refer to Kupers's ICM address \cite{KupersSurvey} for a thorough survey of \Disc-presheaves.

Let $X$ be an $n$-truncated $\mc{D}$isc-presheaf for $n\ge1$.
Then, there is an underlying space $|X|:=X(\R^d)/\Diff(\R^d)$.
When $X=E_M$, there are homotopy equivalences $|E_M|\simeq E_M(\R^d)/O(d)\simeq M$ where the first equivalence follows from the equivalence $\Diff(\R^d)\simeq O(d)$ and the second equivalence follows from restricting to the origin.
The underlying space has an associated $d$-dimensional vector bundle $\xi_X$ classified by by $X(\R^d)/\Diff(\R^d)\to */\Diff(\R^d)\simeq BO(d)$.
When $X=E_M$, this classifies the tangent bundle of $M$.
Similar statements hold in the topological setting.
If $f:X\to Y$ is a map of \Disc-presheaves, we write $|f|$ for the map of underlying spaces.
In the case $X=E_M$ and $Y=E_N$ for manifolds $M$ and $N$, we write $|f|$ for the map $M\to N$ obtained from the above homotopy equivalences instead.

One can also make analogous definitions using topological manifolds instead of smooth manifolds.
We will only work in the smooth setting to simplify notation.

Our main result is a generalization of \cite[Theorem 2.5]{KKM} (see also \cite[Theorem 5.1(i)]{KupersSurvey}).
Recall the Kervaire--Milnor sequence
\[
bP_{d+1}\to\Theta_d\to\op{coker}(J_d:\pi_d O\to \pi_d^s)
\]
where $\Theta_d$ is Kervaire--Milnor's group of homotopy spheres.
Krannich--Kupers--Mehzer show that, if the $2$-truncated \Disc-presheaves of two homotopy spheres $\Sigma_1$ and $\Sigma_2$ are equivalent, then $\Sigma_1\# (-\Sigma_2)\in bP_{d+1}$. 

In order to state our results, we first recall some notions from surgery theory.
The surgery theoretic structure set $\mc{S}^{h,\op{DIFF}}(N)$ of a smooth, connected $d$-manifold $N$ has elements represented by homotopy equivalences $g:M\to N$, where $M$ is a smooth manifold.
Two such elements are equivalent if they are smoothly $h$-cobordant and $\mc{S}^{h,\op{DIFF}}(N)$ is a pointed set with basepoint the identity.
When $d\ge5$ and $N$ is closed, the surgery exact sequence is the exact sequence of pointed sets
\[
\cdots\to L_{d+1}^h(\pi_1N)\to\mc{S}^{h,\op{DIFF}}(N)\to[N,F/O]\to L_d^h(\pi_1N)
\]
where $L_{d+1}^h(\pi_1N)$ is Wall's surgery group (technically, this group also depends on the orientation $\pi_1 N\to\Z/2$).
The map to $[N,F/O]$ sends a homotopy equivalence to its normal invariant, which we discuss in more detail in Section \ref{section : NI}.

When $N=S^d$, then $\mc{S}^{h,\op{DIFF}}(S^d)$ can be identified with Kervaire--Milnor's group of homotopy spheres $\Theta_d$.
Letting $e$ denote the trivial group,
\[
L_{d+1}^h(e)\cong\begin{cases}\Z&d+1\equiv0\text{ mod }4\\ \Z/2&d+1\equiv 2\text{ mod }4\\0&\text{otherwise}\end{cases}.
\]
This can be found in \cite[13.A]{Wall}, for instance.
The image of the $L$-group in $\Theta_d$ is precisely the subgroup $bP_{d+1}$ of homotopy spheres bounding parallelizable $(d+1)$-manifolds.
Recall $F:=\varinjlim_n hAut_*(S^n)$ is the space obtained by stabilizing the space of basepoint-preserving self-homotopy equivalences of $S^n$.
The space $F/O$ is defined as the fiber of $BO\to BF$ and $\pi_dF\cong\pi_d^s$ for $d\ge1$.
Under this isomorphism, the map $\pi_{d+1}BO\to\pi_{d+1}BF$ can be identified with $J_d$.
Hence, the cokernel of $J_d$ may be identified with a subgroup of $\pi_d F/O$.
The image of $\mc{S}^{h,\op{DIFF}}(S^d)\to\pi_dF/O$ is contained in $\op{coker}J_d$.
So, one recovers the Kervaire--Milnor sequence from the smooth surgery exact sequence.

There is also a topological surgery exact sequence which is algebraically nicer since the normal invariants can be identified with the generalized homology group $[N,F/TOP]\cong H_d(N;L(e)\langle1\rangle)$, where $L(e)\langle1\rangle$ is the $1$-connective surgery spectrum of the trivial group.
Thus the exact sequence becomes
\[
\cdots \to H_{d+1}(N;L(e)\langle1\rangle)\to L_{d+1}^h(\pi_1 N)\to\mc{S}^{h,\op{TOP}}(N)\to H_d(N;L(e)\langle1\rangle)\to \cdots
\]
and the map $H_{d+1}(N;L(e)\langle1\rangle)\to L_{d+1}^h(\pi_1 N)$ is the assembly map.

\subsection{Main Results}
Our main theorem is the following.
\begin{theorem}\label{thm: main}
Let $M$ and $N$ be closed, smooth manifolds and let $f:E_M\to E_N$ be an equivalence of $2$-truncated \Disc-presheaves.
Then $|f|:M\to N$ represents an element in the structure set $\mc{S}^{h,\op{DIFF}}(N)$ with trivial normal invariant.
The analogous result holds in the topological setting.
\end{theorem}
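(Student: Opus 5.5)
Throughout, write $g:=|f|:M\to N$. The plan is to identify the normal invariant of $g$ with data that the $2$-truncated presheaf remembers, namely the tangent bundle together with the configuration space of two points.

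\textbf{Step 1: $g$ is a homotopy equivalence preserving tangent bundles.} Evaluating $f$ on $\R^d$ gives a $\Diff(\R^d)$-equivariant equivalence $E_M(\R^d)\to E_N(\R^d)$. Passing to quotients gives a homotopy equivalence $|E_M|\to|E_N|$ over $BO(d)$. Hence $g$ is a homotopy equivalence covered by a bundle isomorphism $TM\cong g^*TN$, and $g$ represents an element of $\mc{S}^{h,\op{DIFF}}(N)$.

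\textbf{Step 2: set up the normal invariant.} The normal invariant of $g$ is the class in $[N,F/O]$ of the pair consisting of the stable bundle $\nu=(g^{-1})^*\nu_M$ and a fibre homotopy trivialisation of $\nu-\nu_N$. The trivialisation is obtained from the degree one normal map data, equivalently from the $S$-duality identification of the Thom spectra $M^{-TM}\simeq N^{-TN}$ compatible with collapse maps. Step 1 already makes $\nu-\nu_N$ stably trivial as a vector bundle. So it remains to show that the induced fibre homotopy self-equivalence, a map $N\to F$ (or its image in $[N,F/O]$), is trivial. Concretely, we must show that the Spanier--Whitehead duality data of $M$ and $N$ correspond under $g$ and the tangential identification.

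\textbf{Step 3: use the $2$-point level (main obstacle).} This is the hardest step. Following the Krannich--Kupers--Mehzer approach, $E_M(\underline2\times\R^d)/\Diff(\R^d)^2$ is equivalent to the ordered configuration space $F_2(M)$. The inclusion $F_2(M)\subset M\times M$, with the complement of the diagonal modelled by the sphere bundle $S(TM)$ near $\Delta$, is what encodes the Thom collapse. More precisely, the cofibre of $F_2(M)\to M\times M$ is $\op{Th}(TM)$. The resulting map
\[
M_+\wedge M_+\to \op{Th}(TM)
\]
is, after Atiyah duality, the duality pairing exhibiting $M^{-TM}$ as the dual of $M_+$. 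The equivalence $f$ gives a compatible commutative square of $F_2(M)\to M\times M$ and $F_2(N)\to N\times N$ over the identification of tangent bundles from Step 1, since restricting to each point is a presheaf map $\R^d\to\underline2\times\R^d$. It therefore gives a homotopy commutative diagram of these pairings. From this I would deduce that the Thom-collapse/duality maps agree, so the fibre homotopy trivialisation of $\nu-\nu_N$ coming from $g$ agrees with the one coming from the bundle isomorphism. Hence the normal invariant vanishes.

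The delicate point is making the identification of the cofibre with $\op{Th}(TM)$ natural in presheaf maps, so that it respects the tangential structure from Step 1. I would first check the case $M=N=S^d$, where this recovers the statement $\Sigma_1\#(-\Sigma_2)\in bP_{d+1}$, i.e. trivial image in $\op{coker}J$.

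\textbf{Step 4: topological case.} The topological case is identical, replacing $O(d)$ by $\op{Top}(d)$ and $F/O$ by $F/TOP$. The tangent microbundle and the topological configuration spaces play the same roles.
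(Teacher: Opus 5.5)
Your outline has the right shape, and Step 3 is in effect a hand-built version of Krannich--Kupers--Mehzer's Theorem 2.6, which the paper simply cites. The gap is the last inference of Step 3, and that inference is exactly what the paper's proof addresses.

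What the level-$2$ data gives naturally is the map of sphere bundles $S(TM)\to S(TN)$, and hence $\op{Th}(\phi)$ on $\op{Th}(TM)$. This comes from the composition map $\op{Emb}(\R^d\sqcup\R^d,\R^d)\times E_M(\R^d)\to E_M(\underline2\times\R^d)$. It is this map, not restriction to single points, that makes the cofibre identification natural; restriction to points only gives compatibility with the projections.

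But $M_+\wedge M_+\to\op{Th}(TM)$ is not itself a duality pairing. To get $M_+\wedge M^{-TM}\to S^0$ you must twist it, fibrewise over the second factor, by an inverse of the fibrewise sphere $S^{TM}$. In the model used by the paper and by KKM, $\op{Th}(-\xi)$ is the colimit of the fibrewise Spanier--Whitehead duals $D(\Sigma^\infty S^{\xi})$. Naturality then shows only that the induced map of colimits, built from the duals of $S^{\phi}$, sends $c_M$ to $c_N$. That is KKM's statement. It does not say that this equivalence is induced by a vector bundle isomorphism $\nu_M\cong g^*\nu_N$, which is what trivial normal invariant means in Section \ref{section : NI}. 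Your sentence ``the fibre homotopy trivialisation of $\nu-\nu_N$ coming from $g$ agrees with the one coming from the bundle isomorphism'' assumes exactly this identification.

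The paper supplies the missing identification with Lemma \ref{lem: duality preserves O}. Spanier--Whitehead duality, restricted along $O\to Map_{\mc{S}p}(S^0,S^0)$, is given by the transpose. So the right-hand square of the diagram in the paper's proof commutes through a map $BO\to BO$, and the dualised equivalence of Thom spectra comes from an honest bundle isomorphism. Lemma \ref{lem: duality preserves TOP} is the $TOP$ analogue, which your Step 4 would also need.

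If you would rather twist by the virtual bundle $-TM$ directly, you must instead show that $S^{TM}\wedge_M S^{-TM}\simeq S^0_M$ is natural in $\phi$. That is, $J$ must intertwine the inverse on $BO$ with inversion of invertible spectra, which is the same fact in another guise. You also need the standard identity that $c_M$ pairs with $M_+$ to give the augmentation $M_+\to S^0$, so that $c_M$ is characterised by the pairing. With both of these in place, your argument goes through.
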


In the smooth setting, Theorem \ref{thm: main} shows that \Disc-presheaves can distinguish between certain smooth structures on manifolds.
The following corollary shows that \Disc-presheaves can actually distinguish between certain $PL$-structures.
\begin{cor}\label{cor: PL}
Let $M$ and $N$ be closed, smooth manifolds of dimension at least $5$.
Suppose there is an equivalence of $2$-truncated \Disc-presheaves $f:E_M\to E_N$ such that $|f|$ is homotopic to a homeomorphism and that $H_3(N;\Z)$ has no $2$-torsion.
Then, $M$ and $N$ are $PL$-homeomorphic.
\end{cor}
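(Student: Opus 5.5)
We prove the corollary by combining Theorem \ref{thm: main} with the standard comparison between the smooth, PL and topological surgery exact sequences. Write $h:M\to N$ for a homeomorphism homotopic to $|f|$.

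\textbf{Step 1: what Theorem \ref{thm: main} gives.} By Theorem \ref{thm: main}, $|f|$ defines an element of $\mc{S}^{h,\op{DIFF}}(N)$ with trivial smooth normal invariant in $[N,F/O]$. The normal invariant depends only on the homotopy class of the homotopy equivalence, so the same holds for $h$. Composing with the forgetful maps $F/O\to F/PL$, the PL normal invariant $\eta_{PL}(h)\in[N,F/PL]$ is also trivial. Here $M$ and $N$ carry their Whitehead triangulations, and $h$ is viewed as an element of $\mc{S}^{h,PL}(N)$.

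\textbf{Step 2: pass to the topological structure set.} Since $h$ is a homeomorphism, its image in $\mc{S}^{h,\op{TOP}}(N)$ is the basepoint. The obstruction to deforming a homeomorphism to a PL homeomorphism is the Kirby--Siebenmann theory. The relevant fibration is $TOP/PL\to F/PL\to F/TOP$, with $TOP/PL\simeq K(\Z/2,3)$. The map $\mc{S}^{PL}(N)\to\mc{S}^{\op{TOP}}(N)$ is compatible with the map of surgery exact sequences, which is the identity on $L$-groups. So the element $[h]\in\mc{S}^{PL}(N)$ maps to the base point in $\mc{S}^{\op{TOP}}(N)$. It therefore lies in the image of $[N,TOP/PL]=H^3(N;\Z/2)$, acting through the difference of PL structures on $M$ pulled back along $h$. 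Concretely, $h$ determines a PL structure on $N$, namely the pushforward of that of $M$. Its concordance class relative to the given one is a class $\kappa\in H^3(N;\Z/2)$, and the image of $\kappa$ in $[N,F/PL]$ equals $\eta_{PL}(h)$.

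\textbf{Step 3: kill the obstruction.} By Step 1, $\kappa$ maps to zero in $[N,F/PL]$. So $\kappa$ lies in the image of the boundary map $[N,\Omega(F/TOP)]\to[N,TOP/PL]$ from the Puppe sequence of $TOP/PL\to F/PL\to F/TOP$. The key homotopy-theoretic input is the $4$-stage. The map $K(\Z/2,3)=TOP/PL\to F/PL$ is detected on $\pi_3$ by the $k$-invariant of $F/PL$ at the prime $2$. Equivalently, $\pi_4F/PL\to\pi_4F/TOP$ is multiplication by $2$ on $\Z$, and $F/PL$ has a nontrivial $k$-invariant $\delta Sq^2$ in low degrees. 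Hence the boundary map $\Omega(F/TOP)\to TOP/PL$, restricted to the relevant Postnikov piece, is the reduction $K(\Z,3)\to K(\Z/2,3)$ (mod $2$ reduction of the $\pi_4$ generator). The image of $[N,\Omega F/TOP]\to H^3(N;\Z/2)$ is then the mod-$2$ reduction image $H^3(N;\Z)\to H^3(N;\Z/2)$, possibly modified by low-degree classes. Thus $\kappa=\rho(x)$ for some $x\in H^3(N;\Z)$.

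\textbf{Step 4: use the hypothesis.} We must conclude $\kappa=0$; this is where $H_3(N;\Z)$ having no $2$-torsion enters. By universal coefficients, $H^3(N;\Z/2)\cong \op{Hom}(H_3N,\Z/2)\oplus\op{Ext}(H_2N,\Z/2)$. The hypothesis controls the $\op{Hom}$ term through the absence of $\Z/2$ summands coming from $2$-torsion. I expect the main obstacle to be exactly this identification: determining precisely which classes in $H^3(N;\Z/2)$ map trivially to $[N,F/PL]$. I would try first to show that the kernel of $H^3(N;\Z/2)\to[N,F/PL]$ is the image of the Bockstein-type map tied to $2$-torsion of $H_3(N;\Z)$ (via the dual description of normal invariants as $L$-homology, $H_d(N;L\langle1\rangle)$ by Poincaré duality, whose $\Z/2$ in degree $d-3$ interacts with $H_3$). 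The vanishing of $2$-torsion then forces $\kappa=0$. Once $\kappa=0$, the pushed-forward PL structure on $N$ is concordant, hence (by Kirby--Siebenmann in dimension $\ge5$) isotopic, to the original. So $h$ is isotopic to a PL homeomorphism $M\to N$.
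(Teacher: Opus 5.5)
Steps 1--3 are fine. In particular, the identification in Step 3 of the kernel of $[N,TOP/PL]=H^3(N;\Z/2)\to[N,F/PL]$ with the image of mod-$2$ reduction $\rho\colon H^3(N;\Z)\to H^3(N;\Z/2)$ is correct. But Step 4 uses the hypothesis backwards. The Bockstein sequence $H^3(N;\Z)\xrightarrow{\rho}H^3(N;\Z/2)\xrightarrow{\beta}H^4(N;\Z)\xrightarrow{2}H^4(N;\Z)$ gives $\operatorname{coker}\rho\cong\{x\in H^4(N;\Z):2x=0\}$, and by universal coefficients the torsion of $H^4(N;\Z)$ is that of $H_3(N;\Z)$. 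So ``$H_3(N;\Z)$ has no $2$-torsion'' says exactly that $\rho$ is onto, i.e.\ that $[N,TOP/PL]\to[N,F/PL]$ is the zero map. Your conclusion $\kappa\in\operatorname{im}\rho$ then holds for every $\kappa$ and carries no information. You have swapped kernel and image: the $2$-torsion controls the image of this map, which is isomorphic to $\operatorname{im}\beta$, not its kernel.

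Worse, the target $\kappa(h)=0$ is false in general, because your inputs depend only on the homotopy class of $|f|$ and $\kappa(h)$ does not. Take $M=N=S^3\times S^2$ and $f=\op{id}$. For the nonzero $\kappa\in H^3(N;\Z/2)$, the PL structure $N_\kappa$ has trivial PL normal invariant (as $H^4(N;\Z)=0$). Simply connected PL surgery then yields a PL homeomorphism $g\colon N_\kappa\to N$ homotopic to the identity. As a self-homeomorphism of $N$, $g$ is homotopic to $|f|$ but $\kappa(g)=\kappa\neq0$, so $g$ is not isotopic to a PL homeomorphism. Your argument with $h=g$ would conclude that it is.

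The missing idea is that only the class $[h]\in\mc{S}^{\op{PL}}(N)$ matters. Once $[h]$ is the base point, $h$ is homotopic to a PL homeomorphism, which is weaker than $\kappa(h)=0$. This class is the image of $\kappa(h)$ under $[N,TOP/PL]\to\mc{S}^{\op{PL}}(N)$. The kernel of that map is the image of $\mc{S}^{\op{TOP}}(N\times I,\partial)$, i.e.\ the image, under your boundary map $[\Sigma N_+,F/TOP]\to[N,TOP/PL]$, of the kernel of the surgery obstruction $[\Sigma N_+,F/TOP]\to L_{d+1}(\pi_1N)$. So the question is about the action of $L_{d+1}(\pi_1N)$ on $\mc{S}^{\op{PL}}(N)$, not about normal invariants.

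What the hypothesis actually gives, together with the fact the paper quotes ($F/PL\to F/TOP$ is an isomorphism on homotopy groups except multiplication by $2$ on $\pi_4$), is that $[N,F/PL]\to[N,F/TOP]$ is injective. In particular the PL normal invariant of every homeomorphism already vanishes, so Step 1 adds nothing. The entire difficulty is the $L$-group action, which your proposal never addresses. For simply connected $N$ that action is trivial in PL, and one recovers Sullivan's Hauptvermutung argument. In general normal invariants cannot decide it: every smooth homotopy $5$-torus has trivial normal invariant, yet some are not PL homeomorphic to $T^5$. The paper does not carry out this step itself; it defers to Rudyak's Theorem 19.2.
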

\begin{proof}
This follows from Theorem \ref{thm: main} and \cite[Theorem 19.2]{Rudyak}.
The underlying geometric fact is \cite[Theorem 17.3]{Rudyak}, which states that $F/PL\to F/TOP$ induces isomorphisms in all homotopy groups except $\pi_4$, where it is multiplication by $2$.
\end{proof}
As a concrete application of Corollary \ref{cor: PL}, one may consider exotic tori which are not $PL$-homeomorphic to the standard torus.
Such exotic tori exist in dimensions at least $5$ and, in dimension $5$ and $6$, they constitute all exotic tori by work of Kirby--Siebenmann \cite{KirbySiebenmann}.
When $d\ge 5$, any homotopy equivalence $M\to T^d$ from a closed $d$-manifold $M$ is homotopic to a homeomorphism by the Borel conjecture.
Thus Corollary \ref{cor: PL} implies that homotopy tori with equivalent \Disc-presheaves must be $PL$-homeomorphic.

\begin{remark}
We expect there to be a $PL$-analogue of \Disc-presheaves in which case Corollary \ref{cor: PL} should admit a strengthening where $M$ and $N$ need only be $PL$ manifolds.
\end{remark}

When $N$ is simply connected, the topological surgery exact sequence simplifies considerably.
The assembly map $H_*(N;L(e)\langle1\rangle)\to H_*(pt;L(e)\langle1\rangle)=L_*(e)$ splits and we obtain the following corollary in high dimensions.

\begin{cor}\label{cor: simply connected}
Let $M$ and $N$ be closed, simply connected topological manifolds.
Let $f:E_M\to E_N$ be an equivalence of $2$-truncated \Disc-presheaves.
Then $|f|:M\to N$ is homotopic to a homeomorphism.
\end{cor}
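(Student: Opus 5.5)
The plan is to apply the topological version of Theorem \ref{thm: main} and then use the topological surgery exact sequence for simply connected $N$. Let $d=\dim N$. I will first assume $d\ge 5$, so that the surgery exact sequence is available. This is the range the paper means by ``in high dimensions''. Low dimensions need separate treatment; see the last paragraph.

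By the topological version of Theorem \ref{thm: main}, $|f|:M\to N$ is a homotopy equivalence. It defines an element $[|f|]\in\mc{S}^{h,\op{TOP}}(N)$ whose normal invariant in $H_d(N;L(e)\langle1\rangle)\cong[N,F/TOP]$ vanishes.

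Exactness of
\[
H_{d+1}(N;L(e)\langle1\rangle)\to L_{d+1}^h(e)\to\mc{S}^{h,\op{TOP}}(N)\to H_d(N;L(e)\langle1\rangle)
\]
then shows that $[|f|]$ lies in the orbit of the basepoint under the action of $L_{d+1}^h(e)$. Here I use that $\pi_1N$ is trivial and that $M$, being homotopy equivalent to $N$, is simply connected as well.

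Next I use the splitting of the assembly map. The assembly map $H_{d+1}(N;L(e)\langle1\rangle)\to L_{d+1}(e)$ factors as
\[
H_{d+1}(N;L(e)\langle1\rangle)\to H_{d+1}(pt;L(e)\langle1\rangle)\to L_{d+1}(e),
\]
where the first map is induced by collapsing $N$ to a point. This collapse map is split surjective, with splitting induced by the inclusion of a point.

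The map $H_{d+1}(pt;L(e)\langle1\rangle)=\pi_{d+1}L(e)\langle1\rangle\to L_{d+1}(e)$ is surjective for $d+1\ge1$. So the assembly map is surjective. By exactness, the action of $L_{d+1}^h(e)$ on the basepoint of $\mc{S}^{h,\op{TOP}}(N)$ is then trivial. Precisely: elements in the image of the assembly map act trivially, since they are realized by normal bordisms from the identity to itself.

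It follows that $|f|$ represents the basepoint, so $|f|$ is $h$-cobordant over $N$ to $\op{id}_N$. Since $\pi_1N=e$, the Whitehead group vanishes. The topological $s$-cobordism theorem, valid for $d\ge5$, then shows that this $h$-cobordism is a product. Hence $|f|$ is homotopic to a homeomorphism.

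The main obstacle is the low-dimensional range $d\le4$, where the surgery exact sequence in this form is not available. I would handle these dimensions by classification. For $d\le 2$, a simply connected closed manifold is $S^d$ or a point, and homotopy equivalences of spheres are homotopic to homeomorphisms. For $d=3$, a simply connected closed manifold is $S^3$ by Perelman. For $d=4$, I would invoke Freedman's classification. A homotopy equivalence of simply connected closed topological $4$-manifolds is determined up to homeomorphism by the intersection form and the Kirby--Siebenmann invariant. The Kirby--Siebenmann invariant is a characteristic class of the stable tangent microbundle, and $\xi_{E_M}$ records the tangent bundle. So the equivalence $f$, which identifies tangent microbundles via $|f|$, should identify the Kirby--Siebenmann invariants. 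Freedman's theorem then realizes $|f|$ up to homotopy by a homeomorphism. This step also follows from the trivial normal invariant combined with the $4$-dimensional topological surgery exact sequence, which holds for the trivial group since it is good. So in fact the high-dimensional argument above should go through verbatim for $d=4$.
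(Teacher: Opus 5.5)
\paragraph{Dimensions $d\ge5$ and $d\le3$.}
Here your argument is the paper's. You take the trivial normal invariant from the topological version of Theorem~\ref{thm: main}. You use split surjectivity of the assembly map for simply connected $N$, so the $L_{d+1}(e)$-orbit of the basepoint is just the basepoint. You finish with the $s$-cobordism theorem. For $d\le3$ you use the same classification facts as the paper; note that $d=1$ is vacuous, since $S^1$ is not simply connected.

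\paragraph{Dimension $4$: the classification route does not work.}
The route you list first does not prove the statement. Freedman's classification gives two things: that $M\cong N$, and that every isometry of intersection forms is induced by \emph{some} homeomorphism. It does not give that the particular map $|f|$ is homotopic to a homeomorphism, and these genuinely differ.

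For example, $S^2\times S^2$ has self-homotopy equivalences that induce the identity on cohomology but are not homotopic to any homeomorphism (Cochran--Habegger). Equivalently, $\mc{S}^{\op{TOP}}(S^2\times S^2)$ injects into $[S^2\times S^2,F/\op{TOP}]$ with image of order $4$, in bijection with $H^2(S^2\times S^2;\Z/2)$. Every domain in this structure set is homeomorphic to $S^2\times S^2$, because an even form forces $\op{KS}\equiv\sigma/8\equiv0$.

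These maps preserve the intersection form, the Kirby--Siebenmann invariant and the tangent bundle. So no argument using only that data can succeed. What detects them is the $H^2(N;\Z/2)$-part of the normal invariant, which is exactly what Theorem~\ref{thm: main} kills.

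\paragraph{Fix.}
Discard the classification argument and keep your fallback, which is precisely the paper's argument. It uses the trivial normal invariant together with the $4$-dimensional topological surgery exact sequence for the good group $e$; there $L_5(e)=0$, so the orbit step is immediate. It then uses Freedman's $5$-dimensional topological $s$-cobordism theorem for the trivial group in place of the high-dimensional one, so the argument is not quite ``verbatim''.
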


In dimensions at most $2$, the corollary is immediate.
In dimension $3$ it follows from the Poincar\'e conjecture and in dimension $4$ it follows from the fact that the trivial group is good, so topological surgery works as expected.

\begin{remark}
There are other instances where the assembly map splits.
Suppose $N$ is a closed, connected, aspherical topological manifold whose fundamental group satisfies the $K$ and $L$-theoretic Farrell--Jones conjecture.
Then the assembly map considered above is an isomorphism.
But these hypotheses imply the Borel conjecture for $N$ so the conclusion of Corollary \ref{cor: simply connected} does not give anything new.

Now let $N'$ be a closed, simply connected topological manifold and consider $N'\times N$, where $N$ is as above.
Then the assembly map factors as
\[
H_{d+1}(N'\times N;L(e)\langle1\rangle)\to H_{d+1}(N;L(e)\langle1\rangle)\cong L_{d+1}(\pi_1 N)
\]
which splits.
It follows that the conclusion of Corollary \ref{cor: simply connected} holds for $N'\times N$ in high dimensions.
Note that we are implicitly using the $K$-theoretic Farrell--Jones conjecture and the fact that $\pi_1 N$ is torsion-free to conclude that the Whitehead group $\op{Wh}(\pi_1 N)$ is trivial.
\end{remark}

\subsection{Acknowledgments}
The author was supported by the NSF under Award No. DMS-1839968.
The author would like to thank Alexander Kupers for helpful comments on a draft.

\subsection{AI Declaration}
AI was used only to search for references.

\section{Preliminaries}
\subsection{Normal Invariants}\label{section : NI}
In order to handle the normal invariants, it will be convenient to use the definitions in \cite[Section 7]{LuckMacko}.
Let $N$ be a smooth $d$-manifold.
One can associate to $N$ a Thom spectrum $\op{Th}(-TN)$ and a fundamental class of $N$ determines an element $c_N\in\pi_0\op{Th}(-TN)$.
This Thom spectrum is a shift of the Thom spectrum of the normal bundle $\nu_N$ of $N$, where $N$ is considered as a submanifold of Euclidean space.
If $f:M\to N$ is a homotopy equivalence then the fundamental class of $N$ determines a fundamental class of $M$ and thus an element $c_M\in\pi_0\op{Th}(-TM)$.
In this case, there is a map of Thom spectra $\op{Th}(-TM)\to\op{Th}(-TN)$ sending $c_M$ to $c_N$.
If $f$ is homotopic to a diffeomorphism, then there is an isomorphism $\nu_M\to f^*\nu_N$ of vector bundles over $M$ (possibly after stabilization) giving rise to this map of Thom spectra.
Following \cite[Definition 7.5]{LuckMacko}, we say that a homotopy equivalence $f:M\to N$ has \emph{trivial normal invariant} if there is an isomorphism of vector bundles $\nu_M\to f^*\nu_N$ inducing the equivalence of Thom spectra $\op{Th}(-TM)\to\op{Th}(-TN)$ such that $c_M$ is sent to $c_N$.

\subsection{Construction of Thom Spectra}
Theorem \cite[2.6]{KKM} shows that that the stable collapse map of a manifold can be recovered from the $2$-truncated \Disc-presheaf (both in the smooth setting and the topological setting).
Moreover, the authors show that the collapse map is natural in equivalences of \Disc-presheaves.

We recall the construction of the Thom spectrum $\op{Th}(-\xi_X)$ from \Disc-presheaves.
Let $X$ be an $n$-truncated \Disc-presheaf for $n\ge 2$.
The sphere bundle of $\xi_X$ is recovered from
\[
\frac{\op{Emb}(\R^d\sqcup\R^d,\R^d)}{\Diff(\R^d)^{\times 2}}\times_{\Diff(\R^d)}X(\R^d)
\]
and using that $\op{Emb}(\R^d\sqcup\R^d,\R^d)/\Diff(\R^d)^{\times 2}$ is $\GL_d(\R)$-equivariantly homotopy equivalent to $\R^d\setminus\{0\}$.
The projection to $|X|$ is obtained by mapping $\op{Emb}(\R^d\sqcup\R^d,\R^d)/\Diff(\R^d)^{\times 2}$ to a point.
It is clear from this construction that, if $f:X\to Y$ is an equivalence of $n$-truncated \Disc-presheaves, the induced map $S(\xi_X)\to S(\xi_Y)$ comes from a map of vector bundles.

Let $B$ denote $|X|$.
By unstraightening, the sphere bundle $S(\xi_X)$ is equivalent to a functor $B\to\mc{S}$ sending $0$-simplices to the fiber of the bundle.
The parametrized Thom spectrum $\op{Th}_B(-\xi_X)$ may be regarded as a functor from $B\to\mc{S}p$ given by the composition
\[
B\xrightarrow{S(\xi_B)}\mc{S}\xrightarrow{\Sigma}\mc{S}_*\xrightarrow{\Sigma^\infty}\mc{S}p\xrightarrow{D}\mc{S}p^{op}
\]
where $D$ is Spanier--Whitehead duality.
The functor $\Sigma$ sends a space to its suspension, where the basepoint of the suspension is one of the cone points.
Taking the colimit over $B$ gives the Thom spectrum $\op{Th}(-\xi_X)$.

\section{Proof of Theorem \ref{thm: main}}
The action of $O(d)$ on $S^{d-1}$ given by restricting the action on $\R^{d}$ to the unit sphere gives a map $O(d)\to Map_{\mc{S}}(S^{d-1},S^{d-1})$ and thus a functor $BO(d)\to\mc{S}$.
Since $S(\xi_X)$ is the sphere bundle of a vector bundle on $X$, there is a factorization $B\to BO(d)\to\mc{S}$.
Similarly, the action of $O(d)$ on $S^d$ given by taking the one-point compactification of $\R^d$ gives a map $O(d)\to Map_{\mc{S}_*}(S^d,S^d)$.
If $X$ and $Y$ are $2$-truncated \Disc-presheaves, an equivalence induces an equivalence of vector bundles $\xi_X\cong\xi_Y$.
This induces an equivalence of sphere bundles $S(\xi_X)\cong S(\xi_Y)$.
By unstraightening, we obtain the following diagram.
\[
\begin{tikzpicture}[scale=1.5]
\node (A) at (0,0) {$B\times\Delta^1$};\node (B) at (2,0) {$\mc{S}$};\node (C) at (4,0) {$\mc{S}_*$};\node (D) at (6,0) {$\mc{S}p$};\node (E) at (8,0) {$\mc{S}p^{op}$};
\node (F) at (2,1) {$BO(d)$};\node (G) at (4,1) {$BO(d)$};\node (H) at (6,1) {$BO$};\node (I) at (8,1) {$BO$};
\path[->] (A) edge (B) (B) edge node[above]{$\Sigma$} (C) (C) edge node[above]{$\Sigma^{\infty}$} (D) (D) edge node[above]{$D$} (E) (A) edge (F) (F) edge node[above]{$=$} (G) (G) edge (H) (H) edge[dashed] (I) (F) edge (B) (G) edge (C) (H) edge (D) (I) edge (E);
\end{tikzpicture}.
\]
To show the map of Thom spectra is induced by a map of vector bundles, it suffices to show that there is a map $BO\to BO$ such that the right square commutes.
This is the content of the next lemma.

\begin{lemma}\label{lem: duality preserves O}
Let $t:O\to O$ denote the transpose.
The following diagram commutes up to homotopy
\[
\begin{tikzpicture}[scale=1.5]
\node (A) at (0,1) {$O$};\node (B) at (2,1) {$Map_{\mc{S}p}(S^0,S^0)$};
\node (C) at (0,0) {$O$};\node (D) at (2,0) {$Map_{\mc{S}p}(S^0,S^0)$};
\path[->] (A) edge (B) (A) edge node[left]{$t$} (C) (B) edge node[left]{$D$} (D) (C) edge (D);
\end{tikzpicture}
\]
\end{lemma}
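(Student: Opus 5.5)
We reduce to finite level: $O=\varinjlim_n O(n)$ and $Map_{\mc{S}p}(S^0,S^0)=\varinjlim_n \Omega^nS^n$. The horizontal map is the stable $J$-homomorphism, with $A\in O(n)$ sent to its one-point compactification $S^n\to S^n$. The plan is to show that, at level $n$ and compatibly with the stabilization maps $O(n)\to O(n+1)$, the composite of $J$ with $D$ is homotopic to $A\mapsto J(A^t)$.

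First we make $D$ explicit on self-maps of $S^0$. We model $S^0$ as $\Sigma^{-n}\Sigma^\infty S^n$. The Spanier--Whitehead dual of $S^n$ is realized by an $n$-duality pairing
\[
\epsilon:S^n\wedge S^n\to S^{2n}\simeq\Sigma^n S^n ,
\]
obtained from the one-point compactification of the identification $\R^n\times\R^n\cong\R^{2n}$ (or of the map $(x,y)\mapsto x-y$, which differs by a fixed sign). With this model, for a self-map $g$ of $S^n$ the dual $Dg$ is characterized by the identity $\epsilon\circ(Dg\wedge 1)\simeq\epsilon\circ(1\wedge g)$. We would take this as the working definition of $D$ on $Map(S^0,S^0)$, after checking that it is compatible with suspension in $n$.

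The key step is then linear algebra. For $g=J(A)$, with $A$ orthogonal, choose the pairing to come from the inner product, i.e.\ take the duality in the form $S^n\wedge S^n\to\Sigma^n S^n$ induced by $(x,y)\mapsto(\langle x,-\rangle,y)$. The identity $\langle A^tx,y\rangle=\langle x,Ay\rangle$ then gives a strict, $O(n)$-equivariant equality $\epsilon\circ(J(A^t)\wedge1)=\epsilon\circ(1\wedge J(A))$. This equality holds naturally in $A$, so it gives the homotopy $D\circ J\simeq J\circ t$ as maps of spaces, and not merely on $\pi_0$.

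I expect the main obstacle to be making the characterization of $D$ coherent enough that this pointwise equality yields a homotopy of maps of spaces compatible with stabilization. The way I would handle this is to note that $D$ on endomorphisms of $S^0$ is an anti-automorphism of the $E_1$-space $\Omega^\infty S$, induced by the symmetric monoidal self-duality of $S^0$. One can then check it on the ``universal'' family $O(n)\to Map(S^n,S^n)$ using the equivariant pairing above. An alternative route is to observe that $D$ on invertible elements of $\Omega^\infty S$ is homotopic to the inverse map, since $D(g)\wedge g$-type arguments give $D\simeq\mathrm{id}$ on $GL_1(S)$ as the endomorphism ring is commutative. Since $J$ is a map of group-like monoids and $A^t=A^{-1}$ for orthogonal $A$, the square then commutes up to homotopy through either description.
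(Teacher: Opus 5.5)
\textbf{The main route fails at its key step.} The identity $\langle A^tx,y\rangle=\langle x,Ay\rangle$ is a statement about the scalar pairing $\R^n\times\R^n\to\R$. That pairing is bilinear and not proper, so it induces no based map of one-point compactifications, let alone a duality $S^n\wedge S^n\to S^{2n}$.

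The pairing you actually write down, $(x,y)\mapsto(\langle x,-\rangle,y)$, is just a linear isomorphism $\R^n\times\R^n\cong(\R^n)^*\times\R^n$. With it, $\epsilon\circ(J(A^t)\wedge 1)$ is the compactification of $\operatorname{diag}(A^t,1)$ and $\epsilon\circ(1\wedge J(A))$ is the compactification of $\operatorname{diag}(1,A)$. These are different maps. They agree only after composing with the evaluation $(\phi,y)\mapsto\phi(y)$, which is exactly the non-proper map you cannot use. So there is no strict equivariant identity.

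The paper works up to homotopy instead. It models $e\circ(J(A)\wedge 1)$ by the compactification of $\operatorname{diag}(A,1)$ on $\R^{2n}$ and conjugates by a rotation. Conjugating by an orthogonal matrix $\begin{pmatrix}0&P\\Q&0\end{pmatrix}$ that exchanges the two factors gives $\operatorname{diag}(1,QAQ^{-1})$. So this homotopy identifies $D(J(A))$ with $J(A)$, not with $J(A^t)$. No conjugation can turn the homomorphism $A\mapsto\operatorname{diag}(A,1)$ into the anti-homomorphism $A\mapsto\operatorname{diag}(1,A^{-1})$.

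\textbf{The alternative route contradicts itself.} You assert that $D$ is homotopic to the inverse on $GL_1(S)$, and then justify this with an argument giving $D\simeq\mathrm{id}$. The second statement is the true one, and it holds on all of $Map_{\mathcal{S}p}(S^0,S^0)$: the map $D\colon F(S^0,S^0)\to F(DS^0,DS^0)$ is adjoint to the composition pairing $F(S^0,S^0)\wedge F(S^0,S^0)\to F(S^0,S^0)$, which is the unit isomorphism of $S^0$.

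Now combine this with $J(A^t)=J(A^{-1})=J(A)^{-1}$, and use that inversion acts by $-1$ on $\pi_3$. If both horizontal arrows are literally $J$, the square would force $J_3=-J_3$ on $\pi_3$. But $J_3$ maps onto $\pi_3^s\cong\Z/24$, so the square cannot commute in that reading.

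Hence the transpose cannot be extracted from any pairing; it enters through the contravariance of $D$. The target in the paper's diagram is $\mathcal{S}p^{op}$. The natural map $BO\to\mathcal{S}p^{op}$ is $J^{op}$ precomposed with $BO\simeq BO^{op}$, and on loops this is $A\mapsto J(A)^{-1}$. If you read the bottom arrow this way, the lemma follows in one line from $D\simeq\mathrm{id}$, $J(A^{-1})=J(A)^{-1}$ and $A^t=A^{-1}$.

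Your claim ``$D\simeq$ inverse'' is what one gets by making $D$ covariant on units via $g\mapsto D(g)^{-1}$. That is not the right-hand vertical map of the square, which is defined on all of $Map_{\mathcal{S}p}(S^0,S^0)$, including non-invertible maps. You need to state this convention explicitly. As written, neither of your routes establishes the square.
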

\begin{proof}
For a spectrum $X$, let $e:X\wedge DX\to S^0$ denote the evaluation.
If $f:S^0\to S^0$ is a map of spectra, then $Df$ is determined by the property
\[
e(f\wedge\op{id}_{DX})=e(\op{id}_X\wedge Df):X\wedge DX\to S^0.
\]

Suppose $A\in O(n)$ is an orthogonal transformation.
Then, fixing an identification $DS^0$ with $S^0$, the map $e(A\wedge\op{Id}_{DS^0}):S^0\wedge S^0\to S^0\wedge S^0$ can be modeled, using suspension spectra, by the map $S^{2n}\to S^{2n}$ arising from the one point compactification of the linear transformation $\begin{pmatrix}A&0\\0&\op{id}_{\R^n}\end{pmatrix}$.
By conjugating with a rotation matrix, we obtain a homotopy making the following diagram commute
\[
\begin{tikzpicture}[scale=1.5]
\node (A) at (0,1) {$O(n)$};\node (B) at (2,1) {$Map_{\mc{S}_*}(S^n,S^n)$};
\node (C) at (0,0) {$O(n)$};\node (D) at (2,0) {$Map_{\mc{S}_*}(S^n,S^n)$};
\path[->] (A) edge (B) (A) edge node[left]{$t$} (C) (B) edge node[left]{$D$} (D) (C) edge (D);
\end{tikzpicture}.
\]
The lemma follows from stabilizing this diagram.
\end{proof}

The proof of Lemma \ref{lem: duality preserves O} also shows that the analogous result for homeomorphisms is true.

\begin{lemma}\label{lem: duality preserves TOP}
There is a map $t:TOP\to TOP$ such that the following diagram commutes up to homotopy
\[
\begin{tikzpicture}[scale=1.5]
\node (A) at (0,1) {$TOP$};\node (B) at (2,1) {$Map_{\mc{S}p}(S^0,S^0)$};
\node (C) at (0,0) {$TOP$};\node (D) at (2,0) {$Map_{\mc{S}p}(S^0,S^0)$};
\path[->] (A) edge (B) (A) edge node[left]{$t$} (C) (B) edge node[left]{$D$} (D) (C) edge (D);
\end{tikzpicture}.
\]
\end{lemma}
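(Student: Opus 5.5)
The plan is to mimic the proof of Lemma \ref{lem: duality preserves O}, with the transpose replaced by a construction that makes sense for homeomorphisms. For each $n$, take $TOP(n)$ to be the group of homeomorphisms of $\R^n$ fixing the origin. Every element then extends to a basepoint-preserving self-map of $S^n=\R^n\cup\{\infty\}$, which gives $TOP(n)\to Map_{\mc{S}_*}(S^n,S^n)$. The first step is to decide what $t$ is. Since the transpose of an orthogonal matrix is its inverse, the natural candidate on $TOP(n)$ is inversion, $t(h)=h^{-1}$. This is a continuous map of spaces. It is an anti-homomorphism, but the lemma only needs a map of spaces, so that is harmless. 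Inversion is compatible with the stabilization maps $TOP(n)\to TOP(n+1)$, $h\mapsto h\times\op{id}_\R$, so it passes to a map $t:TOP\to TOP$ on the colimit.

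The second step is to identify $D$ on the image of $TOP(n)$. As in the proof of Lemma \ref{lem: duality preserves O}, fix $DS^0\simeq S^0$ and desuspend. The map $Df$ is characterized by
\[
e(f\wedge\op{id})\simeq e(\op{id}\wedge Df),
\]
and for $h\in TOP(n)$ the left side is modelled by the one-point compactification of $h\times\op{id}_{\R^n}$ on $S^{2n}$. I then need a homotopy, continuous in $h$, from $h\times\op{id}$ to $\op{id}\times h^{-1}$, after composing with the evaluation $e$ (that is, modulo the symmetry built into $e$). In the orthogonal case this is done by conjugating with a rotation. For homeomorphisms I would use the standard trick
\[
h\times\op{id}\simeq (h\times h)(\op{id}\times h^{-1}),
\]
which is an equality. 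The factor $h\times h$ commutes with the swap $\tau$ of the two $\R^n$ factors. Since $e$ is invariant under this swap up to the usual identification, I would argue that $h\times h$ acts trivially after pairing. More concretely: the pairing $S^n\wedge S^n\to S^{2n}$ that models $e$ is, up to homotopy, the diagonal-type class. Precomposing it with $h\wedge h$ acts on $S^{2n}$ by a degree-one homeomorphism, and this agrees with the identity in the colimit, since the unit component of $Map_{\mc{S}p}(S^0,S^0)$ is connected modulo the relevant coherence.

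The main obstacle is making this last point hold in families, not just pointwise. Pointwise, $D$ and inversion agree on $\pi_0$, and that is automatic. The real issue is a homotopy over all of $TOP(n)$. My first attempt would be an Alexander-trick style isotopy, $(h\times h)_s(x,y)=(h\times h)$ conjugated by the shear $(x,y)\mapsto(x, y+sx)$ or by the rotation $R_s$ in the $(x,y)$-plane. One can check that $R_{\pi/2}(h\times h)R_{\pi/2}^{-1}=h\times h$ (up to sign conventions, using $h(-x)$, which forces the symmetric choice). Conjugating by the path $R_s$ then gives a loop that would have to be compared with $e$. If this gets tangled, the fallback is more formal. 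The map $TOP(n)\to Map_{\mc{S}p}(S^0,S^0)$ factors through the $E_1$ (group-like) structure, and $D$ restricted to group-like units is an anti-automorphism homotopic to inversion. That is, $D\circ i\simeq i\circ(\,\cdot\,)^{-1}$, because $D(f)\circ f\simeq\op{id}$ naturally for invertible $f$ in $GL_1(S)$, which follows from $e(f\wedge Df^{-1})$-type naturality of the evaluation. This second argument is the one I would ultimately write down, because it uses no geometry and applies uniformly to $O$ and $TOP$.
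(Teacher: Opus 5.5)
There is a genuine gap, at exactly the step you flag, and it cannot be closed as you have set things up. You fix a normalization: $e$ is modelled by $S^n\wedge S^n=S^{2n}$, and $Df$ is determined by $e(f\wedge\op{id})\simeq e(\op{id}\wedge Df)$. With that normalization, the homotopy you need, from $h\times\op{id}$ to $\op{id}\times h^{-1}$ continuously in $h$, does not exist. The rotation trick, which you use only for the orthogonal case, is what the paper uses here: it says the proof of Lemma \ref{lem: duality preserves O} carries over. It works verbatim for homeomorphisms. The map $R(x,y)=(-y,x)$ lies in $SO(2n)$ and satisfies $R(h\times\op{id})R^{-1}=\op{id}\times h$ for every $h\in TOP(n)$, with no use of linearity of $h$. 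Conjugating along a path from $I$ to $R$ therefore gives $h\times\op{id}\simeq\op{id}\times h$ naturally in $h$. So in your normalization $D(h)\simeq h$, and your target would force $h\simeq h^{-1}$ stably and naturally in $h$. That already fails on $O(n)\subset TOP(n)$: inversion acts by $-1$ on $\pi_3$, while $J\colon\pi_3 SO\to\pi_3^s\cong\mathbb{Z}/24$ is onto. The same computation refutes your intermediate claim that $h\times h$ acts trivially after pairing. On $SO(n)$ the family $A\mapsto A\oplus A$ induces $2J_*\neq 0$ on $\pi_3$. Agreement on $\pi_0$ and connectivity of the unit component say nothing about families; the ``relevant coherence'' is the whole content of the lemma. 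Your formal fallback has the same defect. $D$ on $Map_{\mc{S}p}(S^0,S^0)$ is an anti-homomorphism, but it is homotopic to the identity, not to inversion. The rotation homotopy shows this; formally it is naturality of the unit isomorphism $S^0\wedge S^0\simeq S^0$. Your claim $D(f)\circ f\simeq\op{id}$ is asserted rather than derived.

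The fix is to drop the factorization through $h\times h$. Apply the rotation conjugation directly to $h\times\op{id}_{\R^n}$, then stabilize along $h\mapsto h\times\op{id}_{\R}$; this is the paper's proof. Read literally, it gives the lemma with $t=\op{id}$. Your choice $t(h)=h^{-1}$ is correct only if the right-hand arrow is read covariantly as $f\mapsto D(f)^{-1}$. That is the reading under which the transpose $A^t=A^{-1}$ appears in Lemma \ref{lem: duality preserves O}, and it is what turns the contravariant $D$ into a covariant functor on $BTOP$. Under that reading you get the lemma by inverting the same rotation homotopy. Either way, the essential input is the rotation homotopy, which you never apply to $h\times\op{id}$.
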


\bibliographystyle{alpha}
\bibliography{discpresheaves}
\end{document}